\numberwithin{equation}{section}
\numberwithin{figure}{section}
\theoremstyle{plain}
\newtheorem{thm}{\protect\theoremname}[section]
  \theoremstyle{plain}
  \theoremstyle{definition}
  \theoremstyle{plain}
  \newtheorem{lem}[thm]{\protect\lemmaname}
  \newtheorem{cor}[thm]{\protect\corollaryname}
  \theoremstyle{plain}
	\newtheorem{rem}[thm]{\protect\remarkname}
  \theoremstyle{plain}
	\theoremstyle{plain}
  \providecommand{\definitionname}{Definition}
  \providecommand{\lemmaname}{Lemma}
  \providecommand{\theoremname}{Theorem}
  \providecommand{\corollaryname}{Corollary}
  \providecommand{\remarkname}{Remark}
  \providecommand{\propositionname}{Proposition}
  \providecommand{\examplename}{Example}
	\DeclareMathOperator{\loc}{loc}
	\DeclareMathOperator{\adj}{adj}
	\DeclareMathOperator{\cp}{cap}
\begin{document}

\title[Composition operators on Sobolev spaces and  Ball's classes]{Composition operators on Sobolev spaces and  Ball's classes}

\author{V.~Gol'dshtein, A.~Ukhlov} 

\begin{abstract}
In this paper we study geometric aspects of  Ball's classes in the context of nonlinear elasticity problems. The suggested approach is based on the characterization of Ball's classes $A_{q,q'}(\Omega)$ in terms of composition operators on Sobolev spaces. This characterization allows us to obtain the volume compression (distortion) estimates of topological mappings of Ball's classes. We prove also that Sobolev homeomorphisms of Ball's classes which possess the Luzin  $N$-property are absolutely continuous with respect to capacity.
\end{abstract}
\maketitle
\footnotetext{\textbf{Key words and phrases:}  Sobolev spaces, Nonlinear elasticity, Quasiconformal mappings.} 
\footnotetext{\textbf{2010 Mathematics Subject Classification:} 46E35, 30C65, 73C50.}
\footnotetext{\textbf{Data availability:} Data sharing not applicable to this article as no datasets were generated or analysed during the current study.}

\section{Introduction}

\subsection{Ball's classes}
Let $\Omega$ be a bounded domain in $\mathbb R^n$, $n\geq 3$, and $\varphi: \Omega \to \widetilde{\Omega}$ be a measurable mapping. In fundamental papers \cite{B76,B81} J.~M.~Ball demonstrated that solutions of typical boundary value nonlinear elasticity problems are minimizers of complicated energy integrals of deformations 
$$
I(\varphi,\Omega):=\int_{\Omega}W(x,\varphi(x), D\varphi(x))~dx,
$$
where  $\varphi: \Omega \to \widetilde{\Omega}$ is a continuous weak differentiable mapping (deformation) that is a homeomorphism of boundaries $\partial \Omega$ and $\partial \widetilde{\Omega}$. 

In the spatial case ($n=3$)  J.~M.~Ball \cite{B76,B81} proved that appropriate classes of minimizers (Ball's classes) are
$$
A^{+}_{q,r}(\Omega;\widetilde{\Omega})=\left\{\varphi\in L^1_q(\Omega; \widetilde{\Omega}): \adj D\varphi \in L_r(\Omega),\,\,J(x,\varphi)>0 \,\,\text{for almost all}\,\,x\in\Omega \right\},
$$
where $q > n-1$ and $r\geq {q}/{(q-1)}$. In the case $q>n$ J.~M.~Ball proved \cite{B76,B81} (with some regularity boundary conditions of the variational problem) existence of the inverse mappings and its weak regularity, namely $\varphi^{-1}$ belongs to the class $L^1_1(\widetilde{\Omega})$ where $\widetilde{\Omega}=\varphi(\Omega)$. 
 
The case $n-1<q\leq n$ was studied in subsequent works \cite{HK04,Sv88} (see, also \cite{MST,MTY}) where it was proved that outside of a  "singular" set $S$ of a variational $q$-capacity zero the mapping $\varphi: \Omega\setminus S \to \widetilde{\Omega}$ is a homeomorphism. Its inverse homeomorphism $\varphi^{-1}$ belongs to the space $L^1_1(\widetilde{\Omega})$ and maps the set $\varphi(\Omega \setminus S)$ onto  $\Omega\setminus S$. It was proved also (\cite{HK04}) that the "singular" set $H:=\widetilde{\Omega} \setminus  \varphi(\Omega \setminus S)$ of $\varphi^{-1}$ has $(n-1)$-Hausdorff measure zero. 

In the present work, using the geometric theory of composition operators on Sobolev spaces $L^1_q$ \cite{U93,VU98,VU04} we introduce a characterization of Ball's classes in terms of the composition operators. As one of our results we obtain accurate volume distortion estimates of topological mappings (homeomorphisms) of Ball's classes possessing the Luzin $N$-property. The absolute continuity of these mappings with respect to an appropriate capacity is proved also. The capacity considered as an outer measure associated with corresponding Sobolev spaces $L^1_q$ \cite{Ch54}. This approach allows us to refine in the capacitory terms results of \cite{HK04,Sv88} about "singular" sets.   

\begin{rem}
In \cite{Sv88} the change of variables formulas in Theorem~2 of \cite{Sv88} and Theorem~3 of \cite{Sv88} are valid if the topological mappings $\varphi$ possess the Luzin $N$-property, since these formulas imply  that for any  set of measure zero ($|A|=0$)
we obtain
$$
|\varphi(A)|=\int\limits_A |J(x,\varphi)|~dx=0.
$$
Thereby, in the case $n-1<q<n$, the assumption about the Luzin N-property was missed in \cite{Sv88}.
\end{rem} 

\subsection{Ball's classes and composition operators}
The classical Ball's classes are two-indexed classes $A^{+}_{q,r}(\Omega;\widetilde{\Omega})$. Recall that in domains of finite measure $\Omega\subset\mathbb R^n$ there is an embedding $L_{r_1}(\Omega)\subset L_{r_1}(\Omega)$ if $r_1\leq r_2$, and so
$$
A^{+}_{q,r_1}(\Omega;\widetilde{\Omega})\subset A^{+}_{q,r_2}(\Omega;\widetilde{\Omega})\,\,\text{if}\,\,r_1\leq r_2.
$$
In the present work we study Ball's classes $A^{+}_{q,r}(\Omega;\widetilde{\Omega})$ in the case of the minimal possible second index $r$, namely $r=q/(q-1)$ or $1/q+1/r=1$ and so $r=q'$. We consider functional and geometric properties of one-indexed classes 
$A^{+}_{q,q'}(\Omega;\widetilde{\Omega})$ which are Sobolev homeomorphisms $\varphi:\Omega\to\widetilde{\Omega}$ of the class $L^1_q(\Omega;\widetilde{\Omega})$, $q>n-1$, such that  $\adj D\varphi \in L_{q'}(\Omega)$, $1/q+1/q'=1$, $J(x,\varphi)>0$ for almost all $x\in\Omega$ and $\varphi$ possesses the Luzin $N$-property.

\begin{rem}
In the case $q=n$ ($q'=n/(n-1)$) by the duality composition theorem (Theorem~\ref{CompDualLim}) the Ball's class $A^{+}_{n,n'}(\Omega;\widetilde{\Omega})$ coincides with the Sobolev space $L^1_n(\Omega;\widetilde{\Omega})$ of mappings with $J(x,\varphi)>0$ for almost all $x\in\Omega$.
\end{rem}

The class of Sobolev homeomorphisms $L^1_n(\Omega;\widetilde{\Omega})$ was studied in details in \cite{S85} under a name $BL$-homeo\-mor\-phisms.

\begin{rem}
In \cite{VGR79} it was proved that Sobolev homeomorphisms $\varphi\in L^1_{n,\loc}(\Omega;\widetilde{\Omega})$ possess the Luzin $N$-property. Therefore Ball's classes   $A^{+}_{q,q'}(\Omega;\widetilde{\Omega})$, $n\leq q$ have the Luzin $N$-property. 
\end{rem}

We use the terminology  Sobolev homeomorphism because a possible existence of "singular sets" in $\Omega$ \cite{Sv88}
for the case $n-1<q<n$. It will be demonstrated later that the "singular" sets are sets of capacity zero.

The proposed approach to the geometric properties of Ball's classes $A^{+}_{q,q'}(\Omega;\widetilde{\Omega})$ is based on the geometric theory of composition operators on Sobolev spaces \cite{GGR95,GU10,U93,VU98,VU02,VU04,VU05}. We show that a Sobolev topological mapping (homeomorphism) $\varphi: \Omega \to \widetilde{\Omega}$ such that $J(x,\varphi)>0$ for almost all $x\in\Omega$, belongs to $A^{+}_{q,q'}(\Omega;\widetilde{\Omega})$ if and only if for any function $f\in L^1_{\infty}(\widetilde{\Omega})$ (i.e. for bounded gradients of the stress tensor after the deformation ), the composition $\varphi^{\ast}(f)=f\circ\varphi\in  L^1_q(\Omega)$ and the following inequality is correct
$$
\|f\mid L^1_1(\widetilde{\Omega})\|\leq \|\adj D\varphi | L_{q'}(\Omega)\|\cdot\|\varphi^{\ast}(f)\mid L^1_q(\Omega)\|,
$$
where $1/q+1/q'=1$. This inequality states that the second Ball's condition $\adj D\varphi\in L_{q'}(\Omega)$
is equivalent (for mappings with positive a.~e. Jacobians) to the boundedness of the composition operator 
\begin{equation}
\label{eq:1.0}
\left(\varphi^{-1}\right)^{\ast}: L^1_{q}(\Omega)\to L^1_1(\widetilde{\Omega}),\,\,n-1<q<\infty,
\end{equation}
generated by the inverse mapping (inverse deformation) $\varphi^{-1}:\widetilde{\Omega}\to\Omega$. 

Using this composition operators property (\ref{eq:1.0})  and results of papers \cite{VU98,VU02,VU04} we obtain the following volume distortion estimates for mappings of Ball's classes (Theorem~\ref{distmes} ): 

\noindent
{\it Let $\varphi:\Omega \to \widetilde{\Omega}$ belongs to $A^{+}_{q,q'}(\Omega;\widetilde{\Omega})$, then the following inequality
$$
|\varphi(A)|^{1-\frac{1}{n}}\leq \left(\frac{1}{|A|}\int\limits_A|\adj D\varphi(x)|^{\frac{q}{q-1}}~dx\right)^{1-\frac{1}{q}} \,|A|^{1-\frac{1}{n}}, \quad n-1< q\leq n,
$$
holds for any measurable set $A\subset\Omega$. }

Using capacitary properties of composition operators on Sobolev spaces we obtain corresponding estimates for capacity distortion  of mappings of the Ball's classes $A^{+}_{q,r}(\Omega;\widetilde{\Omega})$ (Theorem~\ref{distcap}):

\noindent
{\it Let $\varphi:\Omega \to \widetilde{\Omega}$ belongs to $A^{+}_{q,q'}(\Omega;\widetilde{\Omega})$, then the following capacity inequality
$$
\cp_{1}(\varphi(E);\widetilde{\Omega})
\leq \|\adj D\varphi | L_{q/(q-1)}(\Omega)\|\cp_{q}^{\frac{1}{q}}(E;{\Omega}), \quad n-1< q<\infty,
$$
holds for any Borel set $E\subset\Omega$. Corresponding local and point wise estimates are also correct.}

This means that topological mappings of Ball's classes  are absolutely continuous with respect to capacity and an image of cavitation of $q$-capacity zero ("singular" set) in a body $\Omega$ has $1$-capacity zero (also "singular" set) in $\widetilde{\Omega}$ and can not lead to the body breaking upon these deformations. So, the cavitation can be characterized in capacitary terms. As a consequence we obtain characterization of cavitations in terms of the Hausdorff measure: for every set $E\subset{\Omega}$ such that $\cp_q(E;\Omega)=0$, $1\leq q\leq n$, the Hausdorff measure $H^{n-1}(\varphi(E))=0$.

\subsection{Composition operators on Sobolev spaces}

The geometric theory of composition operators on Sobolev spaces is based on the nonlinear potential theory \cite{HKM,M} and  in accordance to this approach  we consider Sobolev homeomorphisms of the class $L^1_q(\Omega)$ defined up to a ("singular") set of $q$-capacity zero. 
The composition operators on Sobolev spaces have the characterization in terms of weak ($p,q$)-quasiconformal mappings. Recall that a homeomorphism $\varphi:\Omega\to\widetilde{\Omega}$ of bounded domains $\Omega,\widetilde{\Omega}\subset\mathbb R^n$, is called a weak $(p,q)$-quasiconformal mapping, $1\leq q\leq p<\infty$, \cite{VU98}, if $\varphi$ belongs to the Sobolev space $L^1_{q}(\Omega;\widetilde{\Omega})$, has finite distortion, and 
\[
K_{p,q}(\varphi;\Omega)=\|K_p \mid L_{\kappa}(\Omega)\|<\infty,\,\,\,1/\kappa=1/q-1/p\,\,(\kappa=\infty \,\,\text{if}\,\,p=q),
\]
where the $p$-dilatation of the mapping $\varphi$ at a point $x$ is defined by
$$
K_p(x)=\inf \{k(x): |D\varphi(x)|\leq k(x) |J(x,\varphi)|^{\frac{1}{p}},\,\,x\in\Omega \}.
$$

\begin{rem}
A weakly differentiable mapping $\varphi:\Omega\to\mathbb{R}^{n}$ is the mapping of finite distortion if $D\varphi(x)=0$ for almost all $x$ from $Z=\{x\in\Omega: J(x,\varphi)=0\}$ \cite{VGR79}. 
Mappings of the Ball's classes are mappings of finite distortion due to the property 
$J(x,\varphi)>0$ for almost all  $x\in\Omega$.
\end{rem}

In \cite{U93,VU02} in the framework of the geometric analysis of Sobolev spaces it was proved that $\varphi:\Omega\to\widetilde{\Omega}$ generates by the chain rule $\varphi^{\ast}(f)=f\circ\varphi$ a bounded operator \begin{equation}
\label{eq:1}
\varphi^{\ast}: L^1_p(\widetilde{\Omega})\to L^1_q(\Omega),\,\,1\leq q \leq p < \infty.
\end{equation}
if and only if $\varphi$ is a weak $(p,q)$-quasiconformal mapping. The limit case $p=\infty$ was considered in \cite{GU10}. It was proved that $\varphi\in L^1_q(\Omega)$ 
if and only if the composition operator
\begin{equation}
\label{eq:2}
\varphi^{\ast}: L^1_{\infty}(\widetilde{\Omega})\to L^1_q(\Omega),\,\,1\leq q <\infty,
\end{equation}
is bounded.

{\it Let us consider in more details the case $n=3$ and $q=n-1=2$. Then the condition (\ref{eq:1.0}) is equivalent 
$$
\left(\int\limits_{\widetilde{\Omega}}\frac{|D\varphi^{-1}(y)|^2}{|J(y,\varphi^{-1})|}~dy\right)^{\frac{1}{2}}<\infty.
$$
It means that a ratio of deformations of line elements to deformations of the surface elements of codimension $1$ (of the admissible inverse deformation) must be integrable.  This looks (more or less) natural in a spirit of nonlinear elasticity theory.}

In the present work we give the natural characterization of Ball's classes in terms of the composition operators. Since the composition operators have geometric descriptions in terms of integral dilatations our approach can be interpreted as a geometric description of deformations of elastic bodies.

\begin{rem}
{\bf The functional definition of Ball's classes.}
We can define the Ball classes $A^{+}_{q,q'}(\Omega;\widetilde{\Omega})$, $q>n-1$, $1/q+1/q'=1$, as a class of Sobolev mappings $\varphi:\Omega\to\widetilde{\Omega}$, having the Luzin $N$-property, with $J(x,\varphi)>0$ for almost all $x\in\Omega$ and such that 
$$
\|f\mid L^1_1(\widetilde{\Omega})\|\leq \|\adj D\varphi | L_{q'}(\Omega)\|\cdot\|\varphi^{\ast}(f)\mid L^1_q(\Omega)\|,
$$
for any $f\in L^1_{\infty}(\widetilde{\Omega})$.
\end{rem}

\noindent
{\bf Short historical remarks.} Weak regularity properties of mappings inverse to Sobolev homeomorphisms were intensively studied in the last decades. In frameworks of the geometric theory of composition operators on Sobolev spaces in \cite{U04} it was proved that mappings inverse to Sobolev homeomorphisms of the class $L^1_q(\Omega)$, $q>n-1$, belong to the class $BV(\widetilde{\Omega})$ and in \cite{GU10} it was proved that mappings inverse to Sobolev homeomorphisms of finite distortions of the class $L^1_{n-1}(\Omega)$ possess the Luzin $N$-property, belong to the class $L^1_1(\widetilde{\Omega})$ and generate a bounded composition operator
$$
\left(\varphi^{-1}\right)^{\ast}: L^1_{\infty}(\Omega)\to L^1_1(\widetilde{\Omega}).
$$
In \cite{CHM} was studied weak differentiability of mappings which are inverse to Sobolev homeomorphisms of finite distortions of the class $W^1_{n-1,\loc}(\Omega)$. Unfortunately the proof of the main theorem in \cite{CHM} contains a serious gap: it was claimed  that the norm of an weak inverse differential can be redefined as equal to zero on a singular set $S$ of measure zero. But the weak inverse differential can be redefined arbitrary only on the set  $S$ of measure zero (for example redefined by the unity). It means that the proof in the work \cite{CHM} is correct only under an additional assumption about the Luzin $N$-property of  $\varphi$. Therefore the main theorem in \cite{CHM}  only repeats results of \cite{GU10}.

The weak differentiability of mappings inverse to Sobolev homeomorphisms of finite distortion with $|D\varphi|\in L^{n-1,1}(\Omega)$ was proved in \cite{HKM06}. 

In the paper \cite{IOZ19}  were considered bi-Sobolev mappings $\varphi:\Omega\to\widetilde{\Omega}$ such that $\varphi\in L^1_n(\Omega)$ and $\varphi^{-1}\in L^1_n(\widetilde{\Omega})$ in connections with the the nonlinear elasticity problems. This class of mapping was introduced and studied n the middle of the 20th century in \cite{OS65}. Note that these classes coincide with  classes of weak $(n,1)$-quasiconformal mappings \cite{U93,VU02}.

\section{Composition operators on Sobolev spaces}

In this section we give necessary elements of the Sobolev spaces theory and
the geometric theory of composition operators on Sobolev spaces.

\subsection{Sobolev spaces}

Let $E$ be a measurable subset of $\mathbb R^n$, $n\geq 2$. The Lebesgue space $L_p(E)$, $1\leq p\leq\infty$, is defined as a Banach space of $p$-summable functions $f:E\to \mathbb R$ equipped with the standard norm.
Lebesgue spaces are Banach spaces of equivalence classes. Two functions belongs to the same equivalence class if they are different on a set of measure zero. Define the Lebesgue value of Lebesgue integrable function as
$$
\tilde{f}(x)=\lim\limits_{r\to 0}\frac{1}{|B(x,r)|}\int\limits_{B(x,r)} f(y)~dy
$$
Lebesgue differentiation theorem states that this limit exists almost everywhere, i.e. the redefined by its Lebesgue value function exist almost everywhere. 

If $\Omega$ is an open subset of $\mathbb R^n$, the Sobolev space $W^1_p(\Omega)$, $1\leq p\leq\infty$, is defined 
as a Banach space of locally integrable weakly differentiable functions
$f:\Omega\to\mathbb{R}$ equipped with the following norm: 
\[
\|f\mid W^1_p(\Omega)\|=\| f\mid L_p(\Omega)\|+\|\nabla f\mid L_p(\Omega)\|.
\]
The homogeneous seminormed Sobolev space $L^1_p(\Omega)$, $1\leq p\leq\infty$, is defined as a space
of locally integrable weakly differentiable functions $f:\Omega\to\mathbb{R}$ equipped
with the following seminorm: 
\[
\|f\mid L^1_p(\Omega)\|=\|\nabla f\mid L_p(\Omega)\|.
\]
Recall that in Lipschitz domains $\Omega\subset\mathbb R^n$, $n\geq 2$, Sobolev spaces $W^1_p(\Omega)$ and $L^1_p(\Omega)$ are coincide (see, for example, \cite{M}).

Sobolev spaces are Banach spaces of equivalence classes \cite{M}. To clarify the notion of equivalence classes we use the nonlinear $p$-capacity associated with Sobolev spaces. With the help of the $p$-capacity the Lebesgue differentiation theorem was refined for Sobolev spaces \cite{M}. 

Recall the definition of the $p$-capacity \cite{GResh,HKM,M}.
Suppose $\Omega$ is an open set in $\mathbb R^n$ and  $F\subset\Omega$ is a compact set. The $p$-capacity of $F$ with respect to $\Omega$ is defined by
\begin{equation*}
\cp_p(F;\Omega) =\inf\{\|\nabla f|L_p(\Omega)\|^p\},
\end{equation*} 
where the infimum is taken over all functions $f\in C_0(\Omega)\cap L^1_p(\Omega)$ such that $f\geq 1$ on $F$.  such functions are called admissible functions for the compact set $F\subset\Omega$. 
If 
$U \subset\Omega$ 
is an open set, we define
\begin{equation*}
\cp_{p}(U;\Omega)=\sup_F\{\cp_{p}
(F;\Omega)\,:\,F\subset U,\,\, F\,\,\text{is compact}\}.
\end{equation*}

In the case of an arbitrary set 
$E\subset\Omega$
we define the inner $p$-capacity 
\begin{equation*}
\underline{\cp}_{p}(E;\Omega)=\sup_f\{\cp_{p}(F;\Omega)\, :\,\,F\subset E\subset\Omega,\,\, F\,\,\text{is compact}\},
\end{equation*}
and the outer $p$-capacity 
\begin{equation*}
\overline{\cp}_{p}(E;\Omega)=\inf_E\{\cp_{p}(U;\Omega)\, :\,\,E\subset U\subset\Omega,\,\, U\,\,\text{is open}\}.
\end{equation*}

A set $E\subset\Omega$ is called $p$-capacity measurable, if $\underline{\cp}_p(E;\Omega)=\overline{\cp}_p(E;\Omega)$. Let $E\subset\Omega$ be a $p$-capacity measurable set. The value
$$
\cp_p(E;\Omega)=\underline{\cp}_p(E;\Omega)=\overline{\cp}_p(E;\Omega)
$$
is called the $p$-capacity measure of the set $E\subset\Omega$. By \cite{Ch54} analytical sets (in particular Borel sets) are $p$-capacity measurable sets.

The notion of the $p$-capacity measure permits us to refine the notion of Sobolev functions. Let a function $f\in L^1_p(\Omega)$. Then the refined function 
$$
\tilde{f}(x)=\lim\limits_{r\to 0}\frac{1}{|B(x,r)|}\int\limits_{B(x,r)} f(y)~dy
$$
is defined quasieverywhere i.~e. up to a set of $p$-capacity zero and it is absolutely continuous on almost all lines \cite{M}. This refined function $\tilde{f}\in L^1_p(\Omega)$ is called the unique quasicontinuous representation ({\it a canonical representation}) of the function $f\in L^1_p(\Omega)$. Recall that a function $\tilde{f}$ is termed quasicontinuous if for any $\varepsilon >0$ there is an open  set $U_{\varepsilon}$ such that the $p$-capacity of $U_{\varepsilon}$ is less than $\varepsilon$ and on the set $\Omega\setminus U_{\varepsilon}$ the function  $\tilde{f}$ is continuous (see, for example \cite{HKM,M}). 
In what follows we will use the quasicontinuous (refined) functions only.

Note that the first weak derivatives of
the function $f$ coincide almost everywhere with the usual
partial derivatives (see, e.g., \cite{M} ).

\subsection{Composition operators on Sobolev spaces}

Let us recall basic notations and results of the geometric theory of composition operators on Sobolev spaces. Suppose $\varphi:\Omega\to\mathbb{R}^{n}$ is a mapping of the Sobolev class $W^1_{1,\loc}(\Omega;\mathbb R^n)$. Then the formal Jacobi
matrix $D\varphi(x)$ and its determinant (Jacobian) $J(x,\varphi)$
are well defined at almost all points $x\in\Omega$. The norm $|D\varphi(x)|$ is the operator norm of $D\varphi(x)$,
i.~e.,  $|D\varphi(x)|=\max\{|D\varphi(x)\cdot h| : h\in\mathbb R^n, |h|=1\}$. We also let $l(D\varphi(x))=\min\{|D\varphi(x)\cdot h| : h\in\mathbb R^n, |h|=1\}$. 

The Sobolev mapping $\varphi:\Omega\to\mathbb{R}^{n}$ is a mapping of finite distortion if $D\varphi(x)=0$ for almost all $x$ from $Z=\{x\in\Omega: J(x,\varphi)=0\}$ \cite{VGR79}. 
Of course the condition $J(x,\varphi)>0$ for almost all $x\in\Omega$ of Ball's classes is stronger then condition of finite distortion, i.e. any mapping of such classes has the finite distortion. It means that all general results of this paper are correct for Ball's classes $A_{q,q'}^{+}(\Omega)$.

Let us reproduce here the change of variable formula for the Lebesgue integral \cite{F69, H93}.
Let a mapping $\varphi : \Omega\to \mathbb R^n$ be such that
there exists a collection of closed sets $\{A_k\}_1^{\infty}$, $A_k\subset A_{k+1}\subset \Omega$ for which restrictions $\varphi \vert_{A_k}$ are Lipschitz mapping on the sets $A_k$ and 
$$
\biggl|\Omega\setminus\sum\limits_{k=1}^{\infty}A_k\biggr|=0.
$$
Then there exists a measurable set $S\subset \Omega$, $|S|=0$ such that  the mapping $\varphi:\Omega\setminus S \to \mathbb R^n$ has the Luzin $N$-property and the change of variable formula
\begin{equation}
\label{chvf}
\int\limits_E f\circ\varphi (x) |J(x,\varphi)|~dx=\int\limits_{\mathbb R^n\setminus \varphi(S)} f(y)N_f(E,y)~dy
\end{equation}
holds for every measurable set $E\subset \Omega$ and every non-negative measurable function $f: \mathbb R^n\to\mathbb R$. Here 
$N_f(y,E)$ is the multiplicity function defined as the number of preimages of $y$ under $f$ in $E$.

Sobolev mappings of the class $W^1_{1,\loc}(\Omega;\mathbb R^n)$ satisfy the conditions of the change of variable formula \cite{H93} and so for Sobolev mappings the change of variable formula \eqref{chvf} holds.

If the mapping $\varphi$ possesses the Luzin $N$-property (the image of a set of measure zero has measure zero), then $|\varphi (S)|=0$ and the second integral can be rewritten as the integral on $\mathbb R^n$. Note, that Sobolev homeomorphisms of the class $W^1_{n,\loc}(\Omega;\widetilde{\Omega})$ possess the Luzin $N$-property \cite{VGR79}.

Let $\Omega$ and $\widetilde{\Omega}$ be bounded domains in $\mathbb R^n$, $n\geq 2$. We say that
a homeomorphism $\varphi:\Omega\to\widetilde{\Omega}$ induces a bounded composition
operator 
\[
\varphi^{\ast}:L^1_p(\widetilde{\Omega})\to L^1_q(\Omega),\,\,\,1\leq q\leq p\leq\infty,
\]
by the composition rule $\varphi^{\ast}(f)=f\circ\varphi$, if for
any function $f\in L^1_p(\widetilde{\Omega})$, the composition $\varphi^{\ast}(f)\in L^1_q(\Omega)$
is defined quasi-everywhere in $\Omega$ and there exists a constant $K_{p,q}(\Omega)<\infty$ such that 
\[
\|\varphi^{\ast}(f)\mid L^1_q(\Omega)\|\leq K_{p,q}(\Omega)\|f\mid L^1_p(\widetilde{\Omega})\|.
\]

\begin{rem}
Sobolev homeomorphisms of the class $W^1_p(\Omega)$ has a quasicontinuous representations defined up to a set of $p$-capacity zero. If $p>n$ then a set of $p$-capacity zero is the empty set and Sobolev functions have continuous representations.
\end{rem}

The problem of composition operators on Sobolev spaces arises in \cite{GS82} for Sobolev extension operators in cusp domains and is connected with the Reshennyak's problem (1969) \cite{VG75}.
Recall that in connection with the geometric function theory the $p$-distortion of a mapping $\varphi$ at a point $x\in\Omega$ is defined as
$$
K_p(x)=\inf \{k(x): |D\varphi(x)|\leq k(x) |J(x,\varphi)|^{\frac{1}{p}},\,\,x\in\Omega \}.
$$
If $p=n$ we have the usual conformal dilatation and in the case $p\ne n$ the $p$-dilatation arises in \cite{Ge69} (see, also, \cite{V88}). 

The geometric theory of composition operators on Sobolev spaces is based on the measure property of norms of composition operators (introduced in \cite{U93} and in the limit case $p=\infty$ in \cite{U04}).

\begin{thm}
\label{CompPhi} Let a homeomorphism $\varphi:\Omega\to\widetilde{\Omega}$
between two domains $\Omega$ and $\widetilde{\Omega}$ induces a bounded composition
operator 
\[
\varphi^{\ast}:L^1_p(\widetilde{\Omega})\to L^1_q(\Omega),\,\,\,1\leq q< p\leq\infty.
\]
Then
\[
\Phi(\widetilde{A})=\sup\limits_{f\in L^1_p(\widetilde{A})\cap C_0(\widetilde{A})}\left(\frac{\|\varphi^{\ast}(f)\mid L^1_q(\Omega)\|}{\|f\mid L^1_p(\widetilde{A})\|}\right)^{\kappa},
\]
(where $1/q-1/p=1/{\kappa}$) is a bounded monotone countably additive set function defined on open bounded subsets $\widetilde{A}\subset\widetilde{\Omega}$.
\end{thm}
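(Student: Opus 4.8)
The plan is to establish the three asserted properties of $\Phi$ separately, with essentially all of the work residing in the additivity. Boundedness is immediate from the hypothesis: there is a constant $K_{p,q}(\Omega)<\infty$ with $\|\varphi^{\ast}(f)\mid L^1_q(\Omega)\|\leq K_{p,q}(\Omega)\,\|f\mid L^1_p(\widetilde{\Omega})\|$ for every admissible $f$, so each ratio entering the definition of $\Phi(\widetilde{A})$ is at most $K_{p,q}(\Omega)$, and hence $\Phi(\widetilde{A})\leq K_{p,q}(\Omega)^{\kappa}$ for all open bounded $\widetilde{A}\subset\widetilde{\Omega}$. Monotonicity is equally direct: if $\widetilde{A}_1\subset\widetilde{A}_2$ then every $f\in L^1_p(\widetilde{A}_1)\cap C_0(\widetilde{A}_1)$, extended by zero, lies in $L^1_p(\widetilde{A}_2)\cap C_0(\widetilde{A}_2)$ with unchanged seminorm and unchanged composition, so the supremum over the smaller family cannot exceed the one over the larger; as $\kappa>0$, this order is preserved after raising to the power $\kappa$.

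For additivity I would first treat a pair of disjoint open sets $\widetilde{A}_1,\widetilde{A}_2$. The key structural fact is that an admissible function $f$ on $\widetilde{A}_1\cup\widetilde{A}_2$ splits as $f=f_1+f_2$ with $f_i=f\cdot\chi_{\widetilde{A}_i}\in L^1_p(\widetilde{A}_i)\cap C_0(\widetilde{A}_i)$ --- the compact support of $f$ meets the two disjoint open sets in two compact pieces --- and conversely any pair $f_1,f_2$ scaled by reals $\lambda_1,\lambda_2$ yields an admissible $f$. Since $\nabla f_1$ and $\nabla f_2$ have disjoint supports, $\|\lambda_1 f_1+\lambda_2 f_2\mid L^1_p\|^p=|\lambda_1|^p\|f_1\mid L^1_p\|^p+|\lambda_2|^p\|f_2\mid L^1_p\|^p$; and because $\varphi$ is a homeomorphism the preimages $\varphi^{-1}(\widetilde{A}_1)$ and $\varphi^{-1}(\widetilde{A}_2)$ are disjoint, so the same Pythagorean splitting holds for $\|\varphi^{\ast}(\lambda_1 f_1+\lambda_2 f_2)\mid L^1_q\|^q$.

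With $a_i=\|\varphi^{\ast}(f_i)\mid L^1_q(\Omega)\|$ and $b_i=\|f_i\mid L^1_p(\widetilde{A}_i)\|$, the remaining step is the extremal identity
\[
\sup_{\lambda_1,\lambda_2}\left(\frac{\big(|\lambda_1|^q a_1^q+|\lambda_2|^q a_2^q\big)^{1/q}}{\big(|\lambda_1|^p b_1^p+|\lambda_2|^p b_2^p\big)^{1/p}}\right)^{\kappa}=\left(\frac{a_1}{b_1}\right)^{\kappa}+\left(\frac{a_2}{b_2}\right)^{\kappa}.
\]
Substituting $\mu_i=|\lambda_i|b_i$ and $r_i=a_i/b_i$ this reads $\sup_{\mu_i\geq 0}\big(\sum_i\mu_i^q r_i^q\big)^{1/q}\big(\sum_i\mu_i^p\big)^{-1/p}=\big(\sum_i r_i^{\kappa}\big)^{1/\kappa}$, which is precisely $\ell^{p/q}$--$\ell^{(p/q)'}$ duality (H\"older's inequality with attainable equality, the case $p=\infty$ being the $\ell^{\infty}$ limit), once one notes $q\,(p/q)'=\kappa$. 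Taking suprema over $f_1$ and $f_2$ independently then gives $\Phi(\widetilde{A}_1\cup\widetilde{A}_2)=\Phi(\widetilde{A}_1)+\Phi(\widetilde{A}_2)$, and finite additivity over disjoint open sets follows by induction.

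Countable additivity I would obtain by a compactness argument. Given a disjoint family $\{\widetilde{A}_i\}$ with open bounded union $\widetilde{A}=\bigsqcup_i\widetilde{A}_i$, the bound $\sum_i\Phi(\widetilde{A}_i)\leq\Phi(\widetilde{A})$ follows from finite additivity and monotonicity on $\bigsqcup_{i\leq N}\widetilde{A}_i\subset\widetilde{A}$ as $N\to\infty$. Conversely, any admissible $f$ on $\widetilde{A}$ has compact support, which the disjoint open cover $\{\widetilde{A}_i\}$ meets in only finitely many sets, say those with $i\leq N$; hence $f$ is admissible for $\bigsqcup_{i\leq N}\widetilde{A}_i$, its ratio is at most $\sum_{i\leq N}\Phi(\widetilde{A}_i)\leq\sum_i\Phi(\widetilde{A}_i)$, and taking the supremum over $f$ yields $\Phi(\widetilde{A})\leq\sum_i\Phi(\widetilde{A}_i)$. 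I expect the genuine obstacle to be the finite-additivity step: one must secure the exactness (not merely an inequality) of the extremal identity and, simultaneously, both orthogonal-support decompositions of the numerator and denominator. The homeomorphism hypothesis is indispensable precisely here, as it is what forces $\varphi^{-1}(\widetilde{A}_1)$ and $\varphi^{-1}(\widetilde{A}_2)$ to be disjoint and hence the $L^1_q$-seminorm of $\varphi^{\ast}(f)$ to split additively.
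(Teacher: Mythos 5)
Your proof is correct, but there is nothing in the paper to compare it against: the paper does not prove Theorem~\ref{CompPhi} at all. It is stated as a preliminary and attributed to \cite{U93} (with the limit case $p=\infty$ in \cite{U04}), so the only meaningful comparison is with those references --- and your argument is essentially the standard one found there. Specifically: boundedness and monotonicity via zero extension of compactly supported functions; finite additivity via the splitting $f=f_1+f_2$ of an admissible function over two disjoint open sets, the disjointness of the supports of $\nabla(\varphi^{\ast}f_1)$ and $\nabla(\varphi^{\ast}f_2)$ in $\Omega$ (this is where injectivity and continuity of $\varphi$ enter, as you note), and the sharp H\"older/duality identity that converts the exponent pair $(p,q)$ into the additive exponent $\kappa$; countable additivity then follows because a compact subset of the disjoint union $\bigcup_i\widetilde{A}_i$ can meet only finitely many of the $\widetilde{A}_i$. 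The two points where the argument could genuinely fail --- attainability of equality in the $\ell^{p/q}$--$\ell^{(p/q)'}$ duality, without which one gets only an inequality rather than additivity, and the replacement of the $p$-th power sum by $\max_i|\lambda_i|\,\|f_i\mid L^1_{\infty}\|$ in the denominator when $p=\infty$ (consistent with $\kappa=q$ there) --- are both handled correctly in your write-up, so I see no gap.
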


The following theorem allows us to refine this function $\Phi$ as a measure generated by the $p$-distortion $K_p$.

\begin{thm}
\label{CompTh} A homeomorphism $\varphi:\Omega\to\widetilde{\Omega}$
between two domains $\Omega$ and $\widetilde{\Omega}$ induces a bounded composition
operator 
\[
\varphi^{\ast}:L^1_p(\widetilde{\Omega})\to L^1_q(\Omega),\,\,\,1\leq q\leq p\leq\infty,
\]
 if and only if $\varphi$ is a Sobolev mapping of the class $L^1_{q}(\Omega;\widetilde{\Omega})$, has finite distortion
and 
\[
K_{p,q}(\varphi;\Omega)=\|K_p \mid L_{\kappa}(\Omega)\|<\infty,
\]
where $1/q-1/p=1/{\kappa}$ ($\kappa=\infty$, if $p=q$).
\end{thm}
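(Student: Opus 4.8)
\emph{The plan is to prove the two implications separately.} I would treat the principal range $1\le q<p\le\infty$ through the set function $\Phi$ supplied by Theorem~\ref{CompPhi}, and handle the exceptional case $p=q$ (where $\kappa=\infty$) by the direct pointwise argument of \cite{GGR95,V88}.

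For sufficiency I would start from a quasicontinuous representative of $f\in L^1_p(\widetilde\Omega)$ and verify that the composition $\varphi^\ast(f)=f\circ\varphi$ is weakly differentiable with the chain rule $\nabla(f\circ\varphi)(x)=\nabla f(\varphi(x))\cdot D\varphi(x)$ valid almost everywhere; here the membership $\varphi\in L^1_q(\Omega)$, the finite distortion hypothesis and the $\ACL$ property are precisely what guarantee that the superposition is absolutely continuous on almost all lines. Granting the chain rule, the estimate is routine: pointwise $|\nabla(f\circ\varphi)(x)|\le|\nabla f(\varphi(x))|\,|D\varphi(x)|$, and the finite distortion inequality $|D\varphi(x)|\le K_p(x)|J(x,\varphi)|^{1/p}$ gives
\[
\|\varphi^\ast(f)\mid L^1_q(\Omega)\|^q\le\int_\Omega|\nabla f(\varphi(x))|^q\, K_p(x)^q\,|J(x,\varphi)|^{q/p}\,dx.
\]
Applying H\"older's inequality with exponents $p/q$ and $\kappa/q$ (admissible since $q/p+q/\kappa=1$, using $1/q=1/p+1/\kappa$), and then the change of variables formula \eqref{chvf} for the homeomorphism $\varphi$ in its inequality direction only (so the Luzin $N$-property is not needed here), bounds the right-hand side by $K_{p,q}(\varphi;\Omega)^q\,\|f\mid L^1_p(\widetilde\Omega)\|^q$. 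This is the claimed boundedness, with $K_{p,q}(\Omega)\le K_{p,q}(\varphi;\Omega)$.

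For necessity I would first recover the Sobolev regularity: since $\Omega,\widetilde\Omega$ are bounded, each coordinate function $y\mapsto y_i$ lies in $L^1_p(\widetilde\Omega)$, and $\varphi^\ast(y_i)=\varphi_i\in L^1_q(\Omega)$, whence $\varphi\in L^1_q(\Omega)$. Next I would invoke Theorem~\ref{CompPhi}, which furnishes a bounded, monotone, countably additive set function $\Phi$ on open subsets of $\widetilde\Omega$; by the Lebesgue differentiation theory for such set functions its volume derivative $\Phi'(y)=\lim_{r\to0}\Phi(B(y,r))/|B(y,r)|$ exists and is finite for almost every $y$, with $\int_{\widetilde\Omega}\Phi'(y)\,dy\le\Phi(\widetilde\Omega)<\infty$. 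The crux is the pointwise local estimate
\[
K_p(x)^\kappa\le\Phi'(\varphi(x))\,|J(x,\varphi)|\qquad\text{for a.e. }x\in\Omega .
\]
Once it is established, the finite distortion property follows (where $J(x,\varphi)=0$ the bound forces $D\varphi(x)=0$, because $\Phi'$ is finite a.e.), and a final application of \eqref{chvf} yields $\int_\Omega K_p(x)^\kappa\,dx\le\int_{\widetilde\Omega}\Phi'(y)\,dy\le\Phi(\widetilde\Omega)<\infty$, that is $K_{p,q}(\varphi;\Omega)<\infty$.

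\emph{The main obstacle is establishing this local estimate.} I would work at a point $x_0$ of approximate differentiability with $J(x_0,\varphi)\ne0$ --- for instance a density point of one of the Lipschitz pieces $A_k$ from \eqref{chvf} --- that is also a Lebesgue density point of $\Phi'$. Setting $y_0=\varphi(x_0)$ and fixing a unit vector $v$ realizing $|D\varphi(x_0)|=|D\varphi(x_0)^{\ast}v|$, I would test $\Phi(B(y_0,r))$ against functions approximating the linear map $y\mapsto\langle v,y-y_0\rangle$ truncated to $B(y_0,r)$. The target seminorm of such a test function scales like $r^{n/p}$, while, using differentiability of $\varphi$ at $x_0$ and the fact that the $\varphi$-preimage of $B(y_0,r)$ has measure $\sim r^n/|J(x_0,\varphi)|$, the source seminorm scales like $|D\varphi(x_0)|\,(r^n/|J(x_0,\varphi)|)^{1/q}$; forming the ratio, raising it to the power $\kappa$, dividing by $|B(y_0,r)|$ and letting $r\to0$ gives $\Phi'(y_0)\ge|D\varphi(x_0)|^\kappa|J(x_0,\varphi)|^{-\kappa/q}$, which is exactly the claimed inequality after recalling $1/q-1/p=1/\kappa$. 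Controlling the error terms coming from the linear approximation so that they become negligible in this limit --- and, for the degenerate case $p=q$, replacing the differentiation argument by the direct bound $|D\varphi(x)|\le K_{q,q}(\Omega)\,|J(x,\varphi)|^{1/q}$ read off from testing on single coordinate functions --- is the delicate technical part of the proof.
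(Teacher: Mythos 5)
First, a point of reference: the paper contains no proof of Theorem~\ref{CompTh} at all --- it is imported verbatim from the literature (the case $p=q$ from \cite{GGR95,V88}, the case $q<p<\infty$ from \cite{U93,VU02}, the case $p=\infty$ from \cite{GU10}), so your proposal can only be compared with the strategy of those cited works. At the level of architecture it does match them: sufficiency via the pointwise chain-rule bound, H\"older's inequality with exponents $p/q$ and $\kappa/q$, and the change-of-variables inequality (you correctly note that only the inequality direction is needed, so no Luzin $N$-property is required); necessity via coordinate functions, the set function $\Phi$ of Theorem~\ref{CompPhi}, and differentiation of $\Phi$. Your scaling computation leading to $K_p(x)^{\kappa}\le C\,\Phi'(\varphi(x))\,|J(x,\varphi)|$ is dimensionally correct and is indeed the engine of the known necessity proofs.

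Two steps, however, are genuinely gapped. (1) In the sufficiency part you assert that $\varphi\in L^1_q(\Omega)$, finite distortion and the ACL property ``are precisely what guarantee'' that $f\circ\varphi$ is absolutely continuous on almost all lines for \emph{every} $f\in L^1_p(\widetilde\Omega)$. This is not automatic and is in fact the crux: a composition of a Sobolev function with a Sobolev homeomorphism need not be ACL. The cited proofs first establish the estimate for Lipschitz (or smooth) $f$, where the chain rule is legitimate, and then pass to general $f\in L^1_p(\widetilde\Omega)$ by approximation, using the uniform bound, lower semicontinuity of the seminorm, and quasicontinuous representatives to identify the limit with $f\circ\varphi$ quasi-everywhere; none of this limiting argument appears in your sketch. (2) In the necessity part your derivation of the local estimate presupposes $J(x_0,\varphi)\ne0$ (you use that the preimage of $B(y_0,r)$ has measure $\sim r^n/|J(x_0,\varphi)|$), yet you then conclude finite distortion at points of $Z=\{J(x,\varphi)=0\}$ ``because $\Phi'$ is finite a.e.''. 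That inference does not go through: by the area formula $\varphi(Z)$ is (off the exceptional set) a Lebesgue-null subset of $\widetilde\Omega$, and an almost-everywhere statement in $y$ carries no information on a null set; pulling ``a.e.\ in $\widetilde\Omega$'' back to ``a.e.\ in $Z$'' is precisely a Luzin $N^{-1}$-type assertion you do not have. Finite distortion requires its own argument (in \cite{U93,VU02} this is a separate step, combined with capacitary estimates). Finally, your remark that in the case $p=q$ the pointwise bound $|D\varphi(x)|\le K_{q,q}(\Omega)|J(x,\varphi)|^{1/q}$ can be ``read off from testing on single coordinate functions'' is not correct: coordinate functions only give $\varphi\in L^1_q(\Omega)$; the pointwise bound in \cite{GGR95,V88} comes from a localization/differentiation argument, not from global test functions.
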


This theorem was proved in \cite{U93} (see also \cite{VU02}), case $p=\infty$ was considered in \cite{GU10}. Mappings that satisfy conditions of Theorem~\ref{CompTh} are called weak $(p,q)$-quasiconformal mappings \cite{GGR95,VU98} and are a natural generalization of quasiconformal mappings ($p=q=n$). In \cite{K12}, where Sobolev spaces $W^1_p(\Omega)$ were considered as spaces of locally-in\-tegr\-able functions defined up to a set of measure zero, another proof of sufficiency of conditions of Theorem~\ref{CompTh} was given. Unfortunately, methods of \cite{K12} do not allow to prove necessity conditions of Theorem~\ref{CompTh}.

\begin{rem}
The historical survey on the theory of composition operators on Sobolev spaces can be found in \cite{V12}. Unfortunately, this useful work \cite{V12} doesn't contain essential new results and contains some non-correct citations of previous original papers \cite{U93,VU98,VU02}. Let us remark also that some proofs are not complete: for example, the main result of Section 4 of  \cite{V12} was formulated for general type of mappings, but the proof was given for homeomorphisms only (and so it repeats results of the work \cite{GU10}) and even in this case contains gaps. 
\end{rem}

\subsection{Capacity estimates of composition operators}

The composition operators on Sobolev spaces allow a capacitary description. 
Recall the notion of a variational $p$-capacity \cite{GResh}.
A condenser in a domain $\Omega\subset \mathbb R^n$ is the pair $(F_0,F_1)$ of connected disjoint closed relatively to $\Omega$ sets $F_0,F_1\subset \Omega$. A continuous function $f\in L_p^1(\Omega)$ is called an admissible function for the condenser $(F_0,F_1)$,
if the set $F_i\cap \Omega$ is contained in some connected component of the set $\operatorname{Int}\{x\vert f(x)=i\}$,\ $i=0,1$. We call as the $p$-capacity of the condenser $(F_0,F_1)$ relatively to domain $\Omega$
the following quantity:
$$
{\cp}_p(F_0,F_1;\Omega)=\inf\|f\vert L_p^1(\Omega)\|^p.
$$
Here the greatest lower bond is taken over all functions admissible for the condenser $(F_0,F_1)\subset\Omega$. If the condenser has no admissible functions we put the capacity equal to infinity. 

The following capacitory charaterization of composition operators on Sobolev spaces were given in \cite{VU98,U93}.

\begin{thm}
\label{theorem:CapacityDescPP_O}
Let $1<p<\infty$.
A homeomorphism $\varphi :\Omega\to \widetilde{\Omega}$
generates a bounded composition operator
$$
\varphi^{\ast}: L^1_p(\widetilde{\Omega})\to L^1_p(\Omega)
$$
if and only if for every condenser 
$(F_0,F_1)\subset\widetilde{\Omega}$
the inequality
$$
\cp_{p}^{1/p}(\varphi^{-1}(F_0),\varphi^{-1}(F_1);\Omega)
\leq K_{p,p}(\varphi;\Omega)\cp_{p}^{1/p}(F_0,F_1;\widetilde{\Omega})
$$
holds. 
\end{thm}

\begin{thm}
\label{theorem:CapacityDescPQ_O}
Let $1<q<p<\infty$.
A homeomorphism $\varphi :\Omega\to \widetilde{\Omega}$
generates a bounded composition operator
$$
\varphi^{\ast}: L^1_p(\widetilde{\Omega})\to L^1_q(\Omega)
$$
if and only if
there exists a bounded monotone countable-additive set function
$\Phi$ defined on open subsets of $\widetilde{\Omega}$
such that for every condenser 
$(F_0,F_1)\subset \widetilde{\Omega}$
the inequality
$$
\cp_{q}^{1/q}(\varphi^{-1}(F_0),\varphi^{-1}(F_1);\Omega)
\leq\Phi(\widetilde{\Omega}\setminus(F_0\cup F_1))^{\frac{p-q}{pq}}
\cp_{p}^{1/p}(F_0,F_1;\widetilde{\Omega})
$$
holds. 
\end{thm}

In Section 2.1 we defined $p$-capacity of an arbitrary sets and discussed measurable for $p$-capacity sets. In  \cite{HKM,M} it was proved that the $p$-capacity is an outer measure. 

Now we give the capacitary distortion estimates of Borel sets under homeomorphisms generating composition operators on Sobolev spaces. 

\begin{thm}
\label{theorem:CapacityDescPP}
Let a homeomorphism $\varphi :\Omega\to \widetilde{\Omega}$
generates a bounded composition operator
$$
\varphi^{\ast}: L^1_p(\widetilde{\Omega})\to L^1_p(\Omega),\,\,1\leq p<\infty.
$$
Then the inequality
$$
\cp_{p}^{1/p}(\varphi^{-1}(\widetilde{E});\Omega)
\leq K_{p,p}(\varphi;\Omega)\cp_{p}^{1/p}(\widetilde{E};\widetilde{\Omega})
$$
holds for every Borel set $\widetilde{E}\subset\widetilde{\Omega}$. 
\end{thm}

\begin{proof}

Let $F\subset E=\varphi^{-1}(\widetilde{E})$ be a compact set. Because $\varphi$ is a homeomorphism $\widetilde{F}=\varphi(F)\subset\widetilde{E}$ is also a compact set.  Let $f\in C_0(\widetilde{\Omega})\cap L^1_p(\widetilde{\Omega)}$ be an arbitrary function such that $f\geq 1$ on $\widetilde{F}$. Then the composition $g=\varphi^{\ast}(f)$ belongs to $C_0(\Omega)\cap L^1_p({\Omega)}$, $g\geq 1$ on $F$ and 
$$
\|\varphi^{\ast}(f)\mid L^1_p(\Omega)\|\leq K_{p,p}(\varphi;\Omega) \|f \mid L^1_p(\Omega)\|.
$$
Since the function $g=\varphi^{\ast}(f)\in C_0(\Omega)\cap L^1_p({\Omega)}$ is an admissible function for the compact $F\subset E$, then
$$
\cp_{p}^{1/p}(\varphi^{-1}(\widetilde{F});\Omega)\leq \|\varphi^{\ast}(f)\mid L^1_p(\Omega)\|\leq K_{p,p}(\varphi;\Omega) \|f \mid L^1_p(\Omega)\|.
$$
Taking infimum over all functions $f\in C_0(\widetilde{\Omega})\cap L^1_p(\widetilde{\Omega)}$ such that $f\geq 1$ on $\widetilde{F}$ we have 
$$
\cp_{p}^{1/p}(\varphi^{-1}(\widetilde{F});\Omega)
\leq K_{p,p}(\varphi;\Omega)\cp_{p}^{1/p}(\widetilde{F};\widetilde{\Omega})
$$
for any compact set $\widetilde{F}\subset \widetilde{E}\subset \widetilde{\Omega}$.

Now for the Borel set $\widetilde{E}\subset \widetilde{\Omega}$ we have (by the definition of the $p$-capacity of Borel sets)
$$
\cp_{p}^{1/p}(\widetilde{F};\widetilde{\Omega})\leq \underline{\cp}_{p}^{1/p}(\widetilde{E};\widetilde{\Omega})=\cp_{p}^{1/p}(\widetilde{E};\widetilde{\Omega}).
$$
Hence 
$$
\cp_{p}^{1/p}(\varphi^{-1}(\widetilde{F});\Omega)\leq K_{p,p}(\varphi;\Omega)\cp_{p}^{1/p}(\widetilde{E};\widetilde{\Omega}).
$$
Since $F=\varphi^{-1}(\widetilde{F})$ is an arbitrary compact set, $F\subset E$, $E$ is a Borel set as a preimage of the Borel set $\widetilde{E}$ under the homeomorphism $\varphi$, then
\begin{multline*}
\cp_{p}^{1/p}(\varphi^{-1}(\widetilde{E});\Omega)=\underline{\cp}_{p}^{1/p}(\varphi^{-1}(\widetilde{E});\Omega)=
\sup\limits_{F\subset E}\cp_{p}^{1/p}(F;\Omega)\\
\leq K_{p,p}(\varphi;\Omega)\cp_{p}^{1/p}(\widetilde{E};\widetilde{\Omega}).
\end{multline*}

\end{proof}

\begin{thm}
\label{theorem:CapacityDescPQ}
Let a homeomorphism $\varphi :\Omega\to \widetilde{\Omega}$
generates a bounded composition operator
$$
\varphi^{\ast}: L^1_p(\widetilde{\Omega})\to L^1_q(\Omega),\,\,1\leq q< p<\infty.
$$
Then the inequality
$$
\cp_{q}^{1/q}(\varphi^{-1}(\widetilde{E});\Omega)
\leq K_{p,q}(\varphi;\Omega)\cp_{p}^{1/p}(\widetilde{E};\widetilde{\Omega})
$$
holds for every Borel set $\widetilde{E}\subset\widetilde{\Omega}$. 
\end{thm}

\begin{proof}

Let $F\subset E=\varphi^{-1}(\widetilde{E})$ be a compact set. Because $\varphi$ is a homeomorphism $\widetilde{F}=\varphi(F)\subset\widetilde{E}$ is also a compact set.  Let $f\in C_0(\widetilde{\Omega})\cap L^1_p(\widetilde{\Omega)}$ be an arbitrary function such that $f\geq 1$ on $\widetilde{F}$. Then the composition $g=\varphi^{\ast}(f)$ belongs to $C_0(\Omega)\cap L^1_q({\Omega)}$, $g\geq 1$ on $F$ and 
$$
\|\varphi^{\ast}(f)\mid L^1_q(\Omega)\|\leq K_{p,q}(\varphi;\Omega) \|f \mid L^1_p(\Omega)\|.
$$
Since the function $g=\varphi^{\ast}(f)\in C_0(\Omega)\cap L^1_q({\Omega)}$ is an admissible function for the compact $F\subset E$, then
$$
\cp_{q}^{1/q}(\varphi^{-1}(\widetilde{F});\Omega)\leq \|\varphi^{\ast}(f)\mid L^1_q(\Omega)\|\leq K_{p,q}(\varphi;\Omega) \|f \mid L^1_p(\Omega)\|.
$$
Taking infimum over all functions $f\in C_0(\widetilde{\Omega})\cap L^1_p(\widetilde{\Omega)}$ such that $f\geq 1$ on $\widetilde{F}$ we have 
$$
\cp_{q}^{1/q}(\varphi^{-1}(\widetilde{F});\Omega)
\leq K_{p,q}(\varphi;\Omega)\cp_{p}^{1/p}(\widetilde{F};\widetilde{\Omega})
$$
for any compact set $\widetilde{F}\subset \widetilde{E}\subset \widetilde{\Omega}$.

Now for the Borel set $\widetilde{E}\subset \widetilde{\Omega}$ we have (by the definition of the $p$-capacity of Borel sets)
$$
\cp_{p}^{1/p}(\widetilde{F};\widetilde{\Omega})\leq \underline{\cp}_{p}^{1/p}(\widetilde{E};\widetilde{\Omega})=\cp_{p}^{1/p}(\widetilde{E};\widetilde{\Omega}).
$$
Hence 
$$
\cp_{q}^{1/q}(\varphi^{-1}(\widetilde{F});\Omega)\leq K_{p,q}(\varphi;\Omega)\cp_{p}^{1/p}(\widetilde{E};\widetilde{\Omega}).
$$
Since $F=\varphi^{-1}(\widetilde{F})$ is an arbitrary compact set, $F\subset E$, $E$ is a Borel set as a preimage of the Borel set $\widetilde{E}$ under the homeomorphism $\varphi$, then
\begin{multline*}
\cp_{q}^{1/q}(\varphi^{-1}(\widetilde{E});\Omega)=\underline{\cp}_{q}^{1/q}(\varphi^{-1}(\widetilde{E});\Omega)=
\sup\limits_{F\subset E}\cp_{q}^{1/q}(F;\Omega)\\
\leq K_{p,q}(\varphi;\Omega)\cp_{p}^{1/p}(\widetilde{E};\widetilde{\Omega}).
\end{multline*}

\end{proof}

\section{Composition operators and  Ball's classes}

In this section we consider applications of the geometric theory of composition operators on Sobolev spaces to nonlinear elasticity problems.  These application build on a notion of an inner distortion which is used  for a study of "inverse" composition operators. This notion gives a geometric interpretation of the integrability condition of  $\adj D\varphi$ in the original definition of Ball's classes $A_{q,q'}^{+}(\Omega)$. 

\subsection{Inverse composition operators}

Let $\Omega$ and $\widetilde{\Omega}$ be two bounded domains in $\mathbb R^n$ and $\varphi: \Omega \to \widetilde{\Omega}$
be a mapping of finite distortion of the class $W^1_{1,\loc}(\Omega;\widetilde{\Omega})$.   We define the "normalized" inner $s$-distortion of $\varphi$ at a point $x$ as
$$
K^{I}_{s}(x,\varphi)=
\begin{cases}
\frac{|J(x,\varphi)|^{\frac{1}{s}}}{l(D\varphi(x))},\,\,&J(x,\varphi)\ne 0,\\
0,\,\,&J(x,\varphi)=0,
\end{cases}
$$
where $l(D\varphi(x))$ is defined as $\min\limits_{h=1}|D\varphi(x)\cdot h|$.

Its global integral version is called the inner $(q,s)$-distortion, $1\leq s\leq q\leq\infty$:
$$
K^{I}_{q,s}(\varphi; \Omega)=\|K^{I}_{s}(\varphi)\mid L_{\kappa}(\Omega)\|, \,\,{1}/{\kappa}={1}/{s}-{1}/{q},\,\, (\kappa=\infty,\,\, \text{if}\,\, q=s).
$$

\begin{lem}
\label{remb}
Let a homeomorphism $\varphi:\Omega\to\widetilde{\Omega}$ belongs to $L^1_q(\Omega;\widetilde{\Omega})$ and $J(x,\varphi)>0$ for almost all $x\in\Omega$.
Then 
$$  
\left(\int\limits_{\Omega}\left(\frac{|J(x,\varphi)|}{l(D\varphi(x))}\right)^{\frac{q}{q-1}}~dx\right)^{\frac{q-1}{q}}=
\left(\int\limits_{\Omega}\left|\adj D\varphi(x)\right|^{\frac{q}{q-1}}~dx\right)^{\frac{q-1}{q}}.
$$
\end{lem}

\begin{proof} Using the following equalities (see, for example, \cite{GU10}):
$$
(D\varphi(x))^{-1}=J^{-1}(x,\varphi)\adj D\varphi(x)
$$
and 
$$
\min\limits_{h=1}|D\varphi(x)\cdot h|=\left(\max\limits_{h=1}|(D\varphi(x))^{-1}\cdot h|\right)^{-1}
$$
we have 
$$
K^{I}_{q,1}(\varphi; \Omega)=\left(\int\limits_{\Omega}\left(\frac{|J(x,\varphi)|}{l(D\varphi(x))}\right)^{\frac{q}{q-1}}~dx\right)^{\frac{q-1}{q}}=
\left(\int\limits_{\Omega}\left|\adj D\varphi(x)\right|^{\frac{q}{q-1}}~dx\right)^{\frac{q-1}{q}}.
$$
\end{proof}

The following theorem give the characterization of composition operators in Sobolev spaces in the terms of the inner $(q,s)$-distortion.

\begin{thm}
\label{theorem:lowpq}
Let $\varphi:\Omega\to\widetilde{\Omega}$ be a Sobolev homeomorphism of finite distortion and belongs to $L^1_q(\Omega;\widetilde{\Omega})$.
Then the inverse homeomorphism $\varphi^{-1}:\widetilde{\Omega}\to\Omega$ generates a bounded composition operator 
\begin{equation}
\label{ineqpq}
\left(\varphi^{-1}\right)^{\ast}: L^1_q(\Omega) \to L^1_s(\widetilde{\Omega}),\,\,1\leq s<q<\infty,
\end{equation}
if and only if $\varphi^{-1}\in L^1_s(\widetilde{\Omega};\Omega)$, possesses the Luzin $N^{-1}$-property ($\varphi$ possesses the Luzin $N$-property) and 
$$
K^{I}_{q,s}(\varphi; \Omega)=\left(\int\limits_{\Omega}K^{I}_{s}(x,\varphi)^{\frac{qs}{q-s}}~dx\right)^{\frac{q-s}{qs}}<\infty.
$$
\end{thm}

\begin{proof} 
{\it Necessity.} 
Let the inverse mapping $\varphi^{-1}:\widetilde{\Omega}\to\Omega$ generates a bounded composition operator 
$$
\left(\varphi^{-1}\right)^{\ast}: L^1_q(\Omega)\to L^1_s(\widetilde{\Omega}),\,\,1\leq s<q<\infty.
$$
Then by Theorem~\ref{CompTh}, the inverse mapping $\varphi^{-1}:\widetilde{\Omega}\to\Omega$
belongs to the Sobolev space $L^1_{s}(\widetilde{\Omega};\Omega)$, has finite distortion and 
$$
\left(\int\limits_{\widetilde{\Omega}}\left(\frac{|D\varphi^{-1}(y)|^q}{|J(y,\varphi^{-1})|}\right)^{\frac{s}{q-s}}~dy\right)^{\frac{q-s}{qs}}<\infty,\,\,1\leq s< q<\infty.
$$

Note, that in the case $q\geq n$,the homeomorphism $\varphi:\Omega\to\widetilde{\Omega}$ of the class $L^1_q(\Omega;\widetilde{\Omega})$ possesses the Luzin $N$-property \cite{GResh} and in the case $1\leq s< q<n$ the mapping $\varphi^{-1}$ which generates a bounded composition operator
$$
\left(\varphi^{-1}\right)^{\ast}: L^1_q(\Omega)\to L^1_s(\widetilde{\Omega}),\,\,1\leq s< q<n,
$$
possesses the Luzin $N^{-1}$-property  \cite{VU02}. It means that $\varphi:\Omega\to\widetilde{\Omega}$ possesses the Luzin $N$-property for all $1\leq s<q<\infty$.

Hence, under the conditions of the theorem, $\varphi^{-1}\in L^1_s(\widetilde{\Omega};\Omega)$, possesses the Luzin $N^{-1}$-property for all $1\leq s<q<\infty$ and we can put  $K^{I}_{q,s}(\varphi; \Omega)=0$ on the set $Z=\{x\in\Omega: J(x,\varphi)=0\}$. So
\begin{multline*}
K^{I}_{q,s}(\varphi; \Omega)=\left(\int\limits_{\Omega\setminus Z}\left(\frac{|J(x,\varphi)|}{l(D\varphi(x))^s}\right)^{\frac{q}{q-s}}~dx\right)^{\frac{q-s}{qs}}\\
=\left(\int\limits_{\Omega\setminus Z}\left(\frac{|D\varphi^{-1}(\varphi(x))|^q}{|J(\varphi(x),\varphi^{-1})|}\right)^{\frac{s}{q-s}}|J(x,\varphi)|~dx\right)^{\frac{q-s}{qs}}\\
=\left(\int\limits_{\widetilde{\Omega}}\left(\frac{|D\varphi^{-1}(y)|^q}{|J(y,\varphi^{-1})|}\right)^{\frac{s}{q-s}}~dy\right)^{\frac{q-s}{qs}}<\infty.
\end{multline*}

{\it Sufficiency.} Let $\varphi^{-1}\in L^1_q(\widetilde{\Omega};\Omega)$, possesses the Luzin $N^{-1}$-property and 
$$
K^{I}_{q,s}(\varphi; \Omega)=\left(\int\limits_{\Omega}\left(\frac{|J(x,\varphi)|}{l(D\varphi(x))^s}\right)^{\frac{q}{q-s}}~dx\right)^{\frac{q-s}{qs}}<\infty.
$$
Then
\begin{multline*}
\left(\int\limits_{\widetilde{\Omega}}\left(\frac{|D\varphi^{-1}(y)|^q}{|J(y,\varphi^{-1})|}\right)^{\frac{s}{q-s}}~dy\right)^{\frac{q-s}{qs}}=\left(\int\limits_{\Omega\setminus Z}\left(\frac{|D\varphi^{-1}(\varphi(x))|^q}{|J(\varphi(x),\varphi^{-1})|}\right)^{\frac{s}{q-s}}|J(x,\varphi)|~dx\right)^{\frac{q-s}{qs}}\\
=\left(\int\limits_{\Omega\setminus Z}\left(\frac{|J(x,\varphi)|}{l(D\varphi(x))^s}\right)^{\frac{q}{q-s}}~dx\right)^{\frac{q-s}{qs}}=K^{I}_{q,s}(\varphi; \Omega)<\infty.
\end{multline*}
Then by Theorem~\ref{CompTh} the mapping $\varphi^{-1}$ generates a bounded composition operator 
$$
\left(\varphi^{-1}\right)^{\ast}: L^1_q(\Omega)\to L^1_s(\widetilde{\Omega}),\,\,1\leq s<q<\infty.
$$
\end{proof}

Now we give the description of Ball's classes $A^{+}_{q,q'}(\Omega;\widetilde{\Omega})$ in the terms of composition operators on Sobolev spaces.

\begin{thm}
\label{theorem:ballcomp}
The homeomorphism $\varphi:\Omega\to\widetilde{\Omega}$ between bounded domains $\Omega,\widetilde{\Omega}\subset\mathbb R^n$ belongs to the Ball class $A^{+}_{q,q'}(\Omega;\widetilde{\Omega})$ for $q>n-1$, $1/q+1/q'=1$, if and only if $\varphi\in L^1_q(\Omega;\widetilde{\Omega})$, possesses the Luzin $N$-property and the inverse mapping generates the bounded composition operator
$$
\left(\varphi^{-1}\right)^{\ast}: L^1_q(\Omega)\to L^1_1(\widetilde{\Omega}),
$$
with $\|\left(\varphi^{-1}\right)^{\ast}\|\leq \|\adj D\varphi \mid L_{q'}(\Omega)\|$.
\end{thm}

\begin{proof}
By Lemma~\ref{remb} we have, that the inner integral distortion 
$$
K^{I}_{q,1}(\varphi; \Omega)=\left(\int\limits_{\Omega}\left(\frac{|J(x,\varphi)|}{l(D\varphi(x))}\right)^{\frac{q}{q-1}}~dx\right)^{\frac{q-1}{q}}=
\left(\int\limits_{\Omega}\left|\adj D\varphi(x)\right|^{\frac{q}{q-1}}~dx\right)^{\frac{q-1}{q}}<\infty
$$
Hence, by Theorem~\ref{theorem:lowpq} the inverse mapping generates the bounded composition operator
$$
\left(\varphi^{-1}\right)^{\ast}: L^1_q(\Omega)\to L^1_1(\widetilde{\Omega}),
$$ 
if and only if $\varphi$ belongs to the Ball class $A^{+}_{q,q'}(\Omega;\widetilde{\Omega})$, $q>n-1$, $1/q+1/q'=1$. The estimate of the composition operator norm follows from \cite{U93,VU02}.

\end{proof}

Now we consider two-sides estimates of composition operators.
In \cite{GU10} it was proved that the inequality 
$$
\|\varphi^{\ast}(f)\mid L^1_q(\Omega)\|
\leq \|\varphi\mid L^1_q(\Omega)\|\cdot\|f\mid L^1_{\infty}(\widetilde{\Omega})\|,\,\,1\leq q<\infty,
$$
holds for any $f\in L^1_{\infty}(\widetilde{\Omega})$ if and only if $\varphi\in L^1_q(\Omega)$. 

Combining the previous theorem and this inequality we immediately obtain

\begin{thm} Let the homeomorphism $\varphi:\Omega\to\widetilde{\Omega}$ between bounded domains $\Omega,\widetilde{\Omega}\subset\mathbb R^n$ belongs to the Ball class $A^{+}_{q,q'}(\Omega;\widetilde{\Omega})$ for $q>n-1$, $1/q+1/q'=1$
Then for any function $f\in L^1_{\infty}(\widetilde{\Omega})$ following inequalities
\begin{equation*}
{\|\adj D\varphi \mid L_{q'}(\Omega)\|}^{-1}\cdot\|f\mid L^1_1(\widetilde{\Omega})\|\leq \|\varphi^{\ast}(f)\mid L^1_q(\Omega)\|\leq \|\varphi\mid L^1_q(\Omega)\|\cdot\|f\mid L^1_{\infty}(\widetilde{\Omega})\|
\end{equation*}
hold.
\end{thm}

This theorem demonstrates variations of the nonlinear elastic potential energy under weak quasiconformal deformations of elastic bodies.

\subsection{The weak regularity of inverse Sobolev mappings}

In the geometric theory of composition operators on Sobolev spaces the significant role plays the following composition duality property \cite{U93}. This property represents the weak inverse mapping theorem (in the part of regularity of inverse mappings) for Sobolev mappings.

\begin{thm}
\label{CompDual} Let a homeomorphism $\varphi:\Omega\to\widetilde{\Omega}$,  $\Omega,\widetilde{\Omega}\subset\mathbb R^n$, induces a bounded composition
operator 
\[
\varphi^{\ast}:L^1_p(\widetilde{\Omega})\to L^1_q(\Omega),\,\,\,n-1<q\leq p<\infty.
\]
Then the inverse mapping $\varphi^{-1}:\widetilde{\Omega}\to\Omega$ induces a bounded composition operator 
\[
\left(\varphi^{-1}\right)^{\ast}:L^1_{\tilde{q}}(\Omega)\to L^1_{\tilde{p}}(\widetilde{\Omega}),\,\,\,n-1<\tilde{p}\leq \tilde{q}<\infty,
\]
where $\tilde{p}=p/(p-n+1)$ and $\tilde{q}=q/(q-n+1)$.
\end{thm}

In the present work we prove this property in the limit case $p=\infty$.

\begin{thm}
\label{CompDualLim} Let a homeomorphism $\varphi:\Omega\to\widetilde{\Omega}$,  $\Omega,\widetilde{\Omega}\subset\mathbb R^n$, be a mappings of finite distortion, possesses the Luzin $N$-property and induces a bounded composition
operator 
\[
\varphi^{\ast}:L^1_{\infty}(\widetilde{\Omega})\to L^1_q(\Omega),\,\,\,n-1<q<\infty.
\]
Then the inverse mapping $\varphi^{-1}:\widetilde{\Omega}\to\Omega$ induces a bounded composition operator 
\[
\left(\varphi^{-1}\right)^{\ast}:L^1_{\tilde{q}}(\Omega)\to L^1_{1}(\widetilde{\Omega}),\,\,\,\tilde{q}=q/(q-n+1).
\]
\end{thm}

\begin{proof}
Since $\varphi:\Omega\to\widetilde{\Omega}$ generates a bounded composition
operator
\[
\varphi^{\ast}:L^1_{\infty}(\widetilde{\Omega})\to
L^1_{q}(\Omega),\,\,\,n-1<q< \infty,
\]
then by \cite{GU10} the mapping $\varphi\in L^1_{q}(\Omega;\widetilde{\Omega})$. Because $\varphi$ possesses the Luzin $N$-property, then the inverse mapping belongs to $W^1_{1,\loc}(\widetilde{\Omega})$ and is a mapping of finite distortion \cite{GU10}. Denote by $Z=\{x\in\Omega \mid J(x,\varphi)=0\}$ and $S$ is the set from the change of variables formula (\ref{chvf}), $|S|=0$. Then \cite{U93}
\[
|D\varphi^{-1}(y)|\leq\frac{|D\varphi(x)|^{n-1}}{|J(x,\varphi)|},
\]
for almost all $x\in \Omega\setminus \left(S\cup Z\right)$, $y=\varphi(x)\in \widetilde{\Omega}\setminus \varphi\left(S\cup Z\right)$,
and  
$$
|D\varphi^{-1}(y)|=0\,\,\text{for almost all}\,\,y\in \varphi(S).
$$
Then
\begin{multline*}
\int\limits_{\widetilde{\Omega}}\left(\frac{|D\varphi^{-1}(y)|^{\tilde{q}}}{|J(y,\varphi^{-1})|}\right)^{\frac{1}{\tilde{q}-1}}~dy=\int\limits_{\widetilde{\Omega}\setminus \varphi\left(S\cup Z\right)}\left(\frac{|D\varphi^{-1}(y)|^{\tilde{q}}}{|J(y,\varphi^{-1})|}\right)^{\frac{1}{\tilde{q}-1}}~dy\\
\leq
\int\limits_{\widetilde{\Omega}\setminus \varphi\left(S\cup Z\right)}\left(\left(\frac{|D\varphi(\varphi^{-1}(y))|^{n-1}}{|J(\varphi^{-1}(y),\varphi)|}\right)^{\tilde{q}}\cdot\frac{1}{|J(y,\varphi^{-1})|}\right)^{\frac{1}{\tilde{q}-1}}~dy\\
=
\int\limits_{\widetilde{\Omega}\setminus \varphi\left(S\cup Z\right)}\frac{|D\varphi(\varphi^{-1}(y))|^{q}}{|J(\varphi^{-1}(y),\varphi)|}~dy
=\int\limits_{\Omega\setminus \left(S\cup Z\right)}\frac{|D\varphi(x)|^{q}}{|J(x,\varphi)|}|J(x,\varphi)|~dx\\
\leq
\int\limits_{\Omega}|D\varphi(x)|^{q}~dx<\infty.
\end{multline*}
Hence \cite{U93} $\varphi^{-1}:\widetilde{\Omega}\to\Omega$ generates a bounded composition operator
\[
\left(\varphi^{-1}\right)^{\ast}:L^1_{\tilde{q}}(\Omega)\to L^1_{1}(\widetilde{\Omega}),
\]
where $\tilde{q}=q/(q-n+1)$. 
\end{proof}

\begin{cor}
Let $\varphi:\Omega\to\widetilde{\Omega}$ be a Sobolev homeomorphism of bounded domains $\Omega,\widetilde{\Omega}$ such that 
$J(x,\varphi)>0$ for almost all $x\in\Omega$. Then $\varphi\in L^1_n(\Omega;\widetilde{\Omega})$ if and only if 
$\varphi\in A^{+}_{n,n'}(\Omega;\widetilde{\Omega})$, $n'=n/(n-1)$.
\end{cor}

\begin{proof}
The inclusion 
$$
A^{+}_{n,n'}(\Omega;\widetilde{\Omega})\subset L^1_n(\Omega;\widetilde{\Omega})
$$
holds by the definition of Ball's class $A^{+}_{n,n'}(\Omega;\widetilde{\Omega})$. Now let $\varphi\in L^1_n(\Omega;\widetilde{\Omega})$. Then by Theorem~\ref{CompDualLim} the inverse mapping $\varphi^{-1}:\widetilde{\Omega}\to\Omega$ induces a bounded composition operator 
$$
\left(\varphi^{-1}\right)^{\ast}:L^1_{n}(\Omega)\to L^1_{1}(\widetilde{\Omega}).
$$
By Theorem~\ref{theorem:ballcomp} the mapping $\varphi\in A^{+}_{n,n'}(\Omega;\widetilde{\Omega})$, $n'=n/(n-1)$.
\end{proof}

The key point in proof of the regularity of mappings of Ball's classes plays the regularity of mappings which are inverse to Sobolev mappings. This topic arises in \cite{Z69} and was studied by many authors, see, for example, \cite{CHM,GU10,HKM06,HKO07,U04}. In the present work we use the following theorem from \cite{GU10}:

\begin{thm} \cite{GU10} Let a homeomorphism $\varphi:\Omega\to \widetilde{\Omega}$ between two domains $\Omega,\widetilde{\Omega}\subset\mathbb R^n$ belong to the Sobolev space $L^1_{n-1}(\Omega;\widetilde{\Omega})$, possess the Luzin $N$-property and have finite distortion. Then the inverse mapping $\varphi^{-1}$ belongs to the Sobolev space $L^1_1(\widetilde{\Omega};\Omega)$.
\end{thm}

In \cite{CHM} was proved the local version of this theorem, without the assumption of the Luzin $N$-property. But it seems that the work \cite{CHM} has the following gap: 

{\it Let  $\varphi:\Omega\to \widetilde{\Omega}$ be a homeomorphism between two domains $\Omega,\widetilde{\Omega}\subset\mathbb R^n$ belong to the Sobolev space $L^1_{n-1}(\Omega;\widetilde{\Omega})$. Suppose that $|J(x,\varphi)|\ne 0$ for almost all $x\in\Omega$. Then in \cite{CHM} is defined a "weak inverse upper gradient" $g$ by the formula
$$
g(\varphi(x))=
\begin{cases}
\frac{|\adj D\varphi(x)|}{|J(x,\varphi)|},\,\,&\text{if}\,\,x\in \Omega\setminus S,\\
0, \,\,&\text{if}\,\,x\in S,
\end{cases}
$$
where $S$ is the set from the change of variables formula (\ref{chvf}), $|S|=0$. So, if we define a "weak inverse upper gradient" $g$ by the formula
$$
\widetilde{g}(\varphi(x))=
\begin{cases}
\frac{|\adj D\varphi(x)|}{|J(x,\varphi)|},\,\,&\text{if}\,\,x\in \Omega\setminus S,\\
1, \,\,&\text{if}\,\,x\in S,
\end{cases}
$$
it will be the same measurable function, but the second inequality on the page 233 of \cite{CHM} does not hold. 

}

\section{Measure and capacity distortion estimates}

\subsection{Measure distortion estimates}

In this section we give the volume distortion property of mappings of Ball's classes. Let us recall the following theorem \cite{VU02} in the convenient for us form, because in \cite{VU02} this theorem was proved for general (not necessary homeomorphic mappings).
Let $\varphi:\Omega\to\widetilde{\Omega}$ be a Sobolev mapping, then the inverse $s$-distortion function is defined \cite{VU02} by
$$
H_s(y)=
\begin{cases}
\left(\sum\limits_{x\in\varphi^{-1}(y)\setminus S, J(x,\varphi)\ne 0} \frac{|D\varphi(x)|^s}{|J(x,\varphi)|} \right)^{\frac{1}{s}},\\
0, & \text{otherwise},
\end{cases}
$$
where $S$ is the set from the change of variables formula (\ref{chvf}).

\begin{thm}\cite{VU02}
\label{distqs}
Let a homeomorphism $\varphi:\Omega\to\widetilde{\Omega}$ generates a bounded composition operator
$$
\varphi^{\ast}: L^1_q(\widetilde{\Omega})\to L^1_s(\Omega),\,\,1\leq s\leq q\leq n.
$$
Then for any measurable set $\widetilde{A}\subset\widetilde{\Omega}$ the following inequality
\begin{equation}
\label{mes}
|\varphi^{-1}(\widetilde{A})|^{\frac{1}{s}-\frac{1}{n}}\leq \|H_s \mid L_{\kappa}(\widetilde{A})\| |\widetilde{A}|^{\frac{1}{q}-\frac{1}{n}},\,\,{1}/{\kappa}={1}/{s}-{1}/{q},
\end{equation}
holds.
\end{thm}

Hence, we obtain

\begin{thm}
\label{distmes}
Let a homeomorphism $\varphi$ belongs to the Ball's class $A^{+}_{q,q'}(\Omega;\widetilde{\Omega})$, $n-1< q\leq n$, $1/q+1/q'=1$, then the inequality
$$
|\varphi(A)|^{1-\frac{1}{n}}\leq \left(\frac{1}{|A|}\int\limits_A|\adj D\varphi(x)|^{\frac{q}{q-1}}~dx\right)^{1-\frac{1}{q}} 
|A|^{1-\frac{1}{n}}, \quad n-1< q\leq n,
$$
holds for any measurable set $A\subset\Omega$.
\end{thm}

\begin{proof}
By Theorem~\ref{theorem:ballcomp} the inverse mapping $\varphi^{-1}:\widetilde{\Omega}\to \Omega$ generates the bounded composition operator
$$
\left(\varphi^{-1}\right)^{\ast}: L^1_q(\Omega)\to L^1_1(\widetilde{\Omega}).
$$
Hence by \cite{VU02} we have
\begin{equation*}
|\varphi(A)|^{1-\frac{1}{n}}\leq \|H_1 \mid L_{\kappa}(A)\| |A|^{\frac{1}{q}-\frac{1}{n}},\,\,{1}/{\kappa}=1-{1}/{q},
\end{equation*}

Now we calculate the norm $\|H_1 \mid L_{\kappa}(A)\|$. Since $J(x,\varphi)>0$ for almost all $x\in\Omega$ and $\varphi$ possesses the Luzin $N$-property,
then
\begin{multline*}
\|H_1 \mid L_{\kappa}(A)=\left(\int\limits_A \left(\frac{|D\varphi^{-1}(\varphi(x))|}{|J(\varphi(x),\varphi^{-1})|}\right)^{\frac{q-1}{q}}~dx\right)^{\frac{q}{q-1}}\\
=\left(\int\limits_{\Omega}\left(\frac{|J(x,\varphi)|}{l(D\varphi(x))}\right)^{\frac{q}{q-1}}~dx\right)^{\frac{q-1}{q}}=
\left(\int\limits_{\Omega}\left|\adj D\varphi(x)\right|^{\frac{q}{q-1}}~dx\right)^{\frac{q-1}{q}}.
\end{multline*}

\end{proof}

\subsection{Capacity distortion estimates}

In this section we prove that topological mappings (homeomorphism) of Ball's classes which possess the Luzin $N$-property are absolutely continuous with respect to the corresponding $p$-capacities, which considered as outer measures associated with Sobolev spaces. It refines corresponding results of \cite{HK04,Sv88}. Recall that Borel sets are measurable for the $p$-capacity \cite{M}.

In the following theorem we give the capacitary distortion estimates of the mappings with integrable inner distortion. 

\begin{thm}
\label{theorem:cap}
Let a homeomorphism of finite distortion $\varphi:\Omega\to\widetilde{\Omega}$ belong to $L^1_q(\Omega)$, $1<q<\infty$, possess the Luzin $N$-property and such that
$$
K^{I}_{q,s}(\varphi;\Omega)=\|K^{I}_{s}(\varphi)\mid L_{\kappa}(\Omega)\|<\infty,  \,\,1\leq s < q<\infty,
$$
where ${1}/{\kappa}={1}/{s}-{1}/{q}$.
Then for every Borel set $E\subset{\Omega}$
the inequality
$$
\cp_{s}^{\frac{1}{s}}(\varphi(E);\widetilde{\Omega})
\leq K^{I}_{q,s}(\varphi; \Omega)\cp_{q}^{\frac{1}{q}}(E;{\Omega})
$$
holds. 
\end{thm}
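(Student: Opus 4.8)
The plan is to derive the asserted capacity inequality from the capacitary description of composition operators, Theorem~\ref{theorem:CapacityDescPQ}, applied not to $\varphi$ itself but to its inverse homeomorphism $\varphi^{-1}:\widetilde{\Omega}\to\Omega$. First I would observe that the hypotheses are precisely the conditions appearing in Theorem~\ref{theorem:lowpq}: the mapping $\varphi$ is a Sobolev homeomorphism of finite distortion in $L^1_q(\Omega)$ possessing the Luzin $N$-property, with $K^{I}_{q,s}(\Omega)<\infty$ and $1<s<q<\infty$. The required integrability $\varphi^{-1}\in L^1_s(\widetilde{\Omega})$ is automatic here, since $\Omega$ is bounded and, by H\"older's inequality, finiteness of $K^{I}_{q,s}(\Omega)$ forces finiteness of $\int_{\Omega}K^{I}_{s}(x,\varphi)^{s}\,dx$. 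Hence Theorem~\ref{theorem:lowpq} guarantees that $\varphi^{-1}$ generates a bounded composition operator $(\varphi^{-1})^{\ast}:L^1_q(\Omega)\to L^1_s(\widetilde{\Omega})$.

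The second step is to feed this bounded operator into Theorem~\ref{theorem:CapacityDescPQ}, read with the roles of source and target interchanged: taking the generating homeomorphism to be $\psi=\varphi^{-1}:\widetilde{\Omega}\to\Omega$ and matching indices $p\mapsto q$, $q\mapsto s$ (so that $1<s<q<\infty$ is exactly the required range), the theorem produces a bounded monotone countably additive set function $\Phi$ on open subsets of $\Omega$ such that, for every condenser $(F_0,F_1)\subset\Omega$,
\[
\cp_{s}^{1/s}(\varphi(F_0),\varphi(F_1);\widetilde{\Omega})
\leq \Phi\bigl(\Omega\setminus(F_0\cup F_1)\bigr)^{\frac{q-s}{qs}}\,
\cp_{q}^{1/q}(F_0,F_1;\Omega),
\]
where I have used that the $\psi$-preimages of $F_0,F_1$ are exactly $\varphi(F_0),\varphi(F_1)$ and that the source capacity of $\psi$ is computed in $\widetilde{\Omega}$. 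Note that the exponent $\frac{q-s}{qs}=\frac{1}{\kappa}$ is precisely the reciprocal of the exponent $\kappa$ entering the definition of $\Phi$ in Theorem~\ref{CompPhi}.

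It remains to replace the set-function factor by the constant $K^{I}_{q,s}(\Omega)$. By monotonicity, $\Phi(\Omega\setminus(F_0\cup F_1))\leq\Phi(\Omega)$, and by the definition of $\Phi$ in Theorem~\ref{CompPhi} together with the refinement Theorem~\ref{CompTh}, the total value satisfies $\Phi(\Omega)^{1/\kappa}=K_{q,s}(\varphi^{-1};\widetilde{\Omega})$, the weak $(q,s)$-quasiconformal constant of $\varphi^{-1}$. Finally, the change-of-variables identity already carried out in the necessity part of Theorem~\ref{theorem:lowpq} yields $K_{q,s}(\varphi^{-1};\widetilde{\Omega})=K^{I}_{q,s}(\Omega)$, whence the claim. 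I expect the main obstacle to be exactly this last identification: one must verify that the abstract $\Phi$ furnished by Theorem~\ref{theorem:CapacityDescPQ} is the operator-norm set function of Theorem~\ref{CompPhi}, and then convert the distortion integral of $\varphi^{-1}$ over $\widetilde{\Omega}$ into the inner-distortion integral over $\Omega$. This conversion is where the Luzin $N$-property is essential, since it guarantees $|\varphi(Z)|=0$ and legitimizes the substitution $y=\varphi(x)$, $dy=|J(x,\varphi)|\,dx$.
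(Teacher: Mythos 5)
Your proposal takes essentially the same route as the paper's own proof: obtain the bounded composition operator $(\varphi^{-1})^{\ast}\colon L^1_q(\Omega)\to L^1_s(\widetilde{\Omega})$ from Theorem~\ref{theorem:lowpq}, and then apply the capacitary description of composition operators (Theorem~\ref{theorem:CapacityDescPQ}, i.e.\ the result of \cite{U93,VU98}) to the inverse homeomorphism. The extra details you supply --- the H\"older argument securing $\varphi^{-1}\in L^1_s(\widetilde{\Omega})$ and the identification of $\Phi(\Omega)^{1/\kappa}$ with $K_{q,s}(\varphi^{-1};\widetilde{\Omega})=K^{I}_{q,s}(\Omega)$ via the change of variables carried out in the necessity part of Theorem~\ref{theorem:lowpq} --- are precisely the steps the paper's two-sentence proof leaves implicit.
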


\begin{proof}
By Theorem~\ref{theorem:lowpq} the inverse mapping $\varphi^{-1}: \widetilde{\Omega}\to\Omega$ generates a bounded composition operator
$$
(\varphi^{-1})^{\ast}: L^1_q(\Omega)\to L^1_s(\widetilde{\Omega}), 1\leq s< q<\infty.
$$
Hence, by Theorem~\ref{theorem:CapacityDescPQ} for any Borel set $E\subset{\Omega}$
the inequality
$$
\cp_{s}^{\frac{1}{s}}(\varphi(E);\widetilde{\Omega})
\leq K^{I}_{q,s}(\varphi; \Omega)\cp_{q}^{\frac{1}{q}}(E;{\Omega})
$$
holds. 
\end{proof}

Using this theorem we obtain that topological mappings of Ball's classes  are absolutely continuous with respect to capacity, which is considered as an outer measure associated with the Sobolev spaces. This result refines results of \cite{HK04,Sv88}.  

\begin{thm}
\label{distcap}
Let a homeomorphism $\varphi\in A^{+}_{q,q'}(\Omega;\widetilde{\Omega})$, $1/q+1/q'=1$, then the inequality
$$
\cp_{1}(\varphi(E);\widetilde{\Omega})
\leq \left(\int\limits_\Omega|\adj D\varphi(x)|^{\frac{q}{q-1}}~dx\right)^{1-\frac{1}{q}}\cp_{q}^{\frac{1}{q}}(E;{\Omega})
$$
holds for any Borel set $E\subset\Omega$.
\end{thm}

\begin{proof}
By Lemma~\ref{remb} we have, that the inner integral distortion 
$$
K^{I}_{q,1}(\varphi; \Omega)=\left(\int\limits_{\Omega}\left(\frac{|J(x,\varphi)|}{l(D\varphi(x))}\right)^{\frac{q}{q-1}}~dx\right)^{\frac{q-1}{q}}=
\left(\int\limits_{\Omega}\left|\adj D\varphi(x)\right|^{\frac{q}{q-1}}~dx\right)^{\frac{q-1}{q}}<\infty.
$$
Hence by Theorem~\ref{theorem:cap} we obtain the required inequality.
\end{proof}

Therefore we obtain that mappings of Ball's classes  are absolutely continuous with respect to capacity.

\vskip 0.3cm

{\bf Acknowledgments}:

The first author was supported by the United States-Israel Binational Science Foundation (BSF Grant No. 2014055).

\vskip 0.3cm

Vladimir Gol'dshtein; Department of Mathematics, Ben-Gurion University of the Negev, P.O.Box 653, Beer Sheva, 8410501, Israel 
 
\emph{E-mail address:} \email{vladimir@math.bgu.ac.il} \\

Alexander Ukhlov; Department of Mathematics, Ben-Gurion University of the Negev, P.O.Box 653, Beer Sheva, 8410501, Israel 
							
\emph{E-mail address:} \email{ukhlov@math.bgu.ac.il    

\end{document}